\newlength{\defbaselineskip}
\theoremstyle{plain}
\newtheorem{defin}{Definition}[section]
\newtheorem{theorem}[defin]{Theorem}
\newtheorem{prop}[defin]{Proposition}
\newtheorem{lemma}[defin]{Lemma}
\newtheorem{conj}[defin]{Conjecture}
\newtheorem{remark}[defin]{Remark}
\numberwithin{equation}{section}
\newcommand{\dis}{\displaystyle}
\newcommand{\eps}{\varepsilon}
\newcommand{\NN}{\mathbb{N}}
\def\vp{\varphi}
\def\RN{\mathbb{R}^{N}}
\def\D{\nabla}
\def\div{\text{\text{div}}}
\def\into{\int_{\Omega}}
\def\nk{n_{k}}
\newcommand{\RR}{\mathbb{R}}
\def\sob{W^{1,p}_{0}(\Omega)}
\def\linf{L^{\infty}(\Omega)}
\def\luno{L^{1}(\Omega)}
\def\lp'n{(L^{p'}(\Omega))^{N}}
\def\lr{L^{r}(\Omega)}
\def\lru{L^{r+1}(\Omega)}
\def\lr1d{L^{(r+1)'}(\Omega)}
\def\dualsob{W^{-1,p'}(\Omega)}
\author[R. Durastanti]{Riccardo Durastanti 
\\Dipartimento di Scienze di Base e Applicate per l' Ingegneria, ``Sapienza" Universit\`a di Roma, Via Scarpa 16, 00161 Roma, Italy
\\riccardo.durastanti@sbai.uniroma1.it}
\keywords{Nonlinear elliptic systems, Schr\"odinger-Maxwell equations, Variational methods} \subjclass[2010]{35J47, 35J50, 35J60}
\begin{document}

\title{Regularizing effect for some $p$-Laplacian systems}

\maketitle

\section*{\textbf{Abstract}} 

We study existence and regularity of weak solutions for the following $p$-Laplacian system
$$
\begin{cases}
       -\Delta_p u+A\varphi^{\theta+1}|u|^{r-2}u=f, \quad &u\in\sob, \\
       -\Delta_p \vp=|u|^r\vp^\theta, \quad &\varphi\in\sob, 
\end{cases}
$$
where $\Omega$ is an open bounded subset of $\RN$ $(N\geq 2)$, $\Delta_p v:=\div(|\D v|^{p-2}\D v)$ is the $p$-Laplacian operator, for $1<p<N$, $A>0$, $r>1$, $0\leq\theta<p-1$ and $f$ belongs to a suitable Lebesgue space. In particular, we show how the coupling between the equations in the system gives rise to a regularizing effect producing the existence of finite energy solutions.


\section{\textbf{Introduction}} 
\label{Sec1}

This paper has been motivated by the work of Benci and Fortunato \cite{bf}. In that work the authors, investigating the eigenvalue problem for the Schr\"odinger operator coupled with the electromagnetic field, studied the existence for the following system of Schr\"odinger-Maxwell equations in $\RR^3$
\begin{equation}
\label{pbbf}
\begin{cases}
-\frac{1}{2}\Delta u+\varphi u=\omega u, \\
-\Delta \varphi=4\pi u^2.
\end{cases}
\end{equation}
The existence of a solution of \eqref{pbbf} is proved by using a variational approach: the equations of the system are the Euler-Lagrange equations of a suitable functional that is neither bounded from below nor from above but has a critical point of saddle type. \\

Starting from this work, first Boccardo in \cite{b} then Boccardo and Orsina in \cite{bo} studied the related Dirichlet problem with a source term $f$
\begin{equation}
\label{pbbo}
\begin{cases}
-\Delta u+A\varphi |u|^{r-2}u=f, \quad &u\in W_0^{1,2}(\Omega), \\
-\Delta\varphi=|u|^r, \quad &\varphi\in W^{1,2}_0(\Omega),
\end{cases}
\end{equation}
where $\Omega$ is an open bounded subset of $\RN$ with $N>2$, $A>0$ and $r>1$. \\

In \cite{b} the existence of a weak solution $(u,\varphi)$ in $W^{1,2}_0(\Omega)\times W^{1,2}_0(\Omega)$ is proved if $f$ belongs to $L^{m}(\Omega)$, with $\dis m\geq\frac{2N}{N+2}=(2^*)'$, where $2^*$ is the Sobolev exponent, using once again that $(u,\varphi)$ is a critical point of a suitable functional. The author proves that if $\dis (2^*)'\leq m < \frac{2Nr}{N+2+4r}$, with $r>2^*-1$, the second equation of \eqref{pbbo} admits finite energy solutions even if the datum $|u|^r$ does not belong to the dual space $\dis L^{\frac{2N}{N+2}}(\Omega)$. \\
In \cite{bo} the authors improve this result by proving a regularizing effect also on the solution $u$ of the first equation of \eqref{pbbo}. Existence of a solution $(u,\varphi)$ in $W^{1,2}_0(\Omega)\times W^{1,2}_0(\Omega)$ is proved if $r>2^*$ and $f$ belongs to $L^m(\Omega)$, with $m\geq r'$. Then, in the case $\dis r'\leq m<(2^*)'$, the authors find a finite energy solution $u$ of the first equation of \eqref{pbbo} with data $f$ possibly not belonging to the dual space.  \\

In this paper we are concerned with the existence of solutions for the following nonlinear elliptic system that generalizes \eqref{pbbo}
\begin{equation}
\label{pbp}
\begin{cases}
       -\div(|\D u|^{p-2}\D u)+A\varphi^{\theta+1}|u|^{r-2}u=f, \quad &u\in\sob, \\
       -\div(|\D\varphi|^{p-2}\D\varphi)=|u|^r\vp^\theta, \quad &\varphi\in\sob, 
\end{cases}
\end{equation}
where $\Omega$ is an open bounded subset of $\RN$ with $N\geq 2$, $1<p<N$, $A>0$, $r>1$ and $0\leq\theta<p-1$. \\
In the case $\theta=0$ the system \eqref{pbp} becomes
\begin{equation}
\label{pbp1}
\begin{cases}
       -\div(|\D u|^{p-2}\D u)+A\varphi|u|^{r-2}u=f, \quad &u\in\sob, \\
       -\div(|\D\varphi|^{p-2}\D\varphi)=|u|^r, \quad &\varphi\in\sob.
\end{cases}
\end{equation}
For such value of $\theta$, we show how the regularizing effect proved in \cite{bo} can be improved, proving the existence of a weak solution $u$ in $\sob$ of the first equation of \eqref{pbp1} with $f$ belonging to $L^m(\Omega)$, with $(r+1)'\leq m<(p^*)'$. \\
Conversely, in the case $p=2$ and $0<\theta<1$ the second equation of the system \eqref{pbp} is sublinear. This fact does not allow us to use the same method as the previous case and we are not able to prove the regularizing effect on $u$. However, we generalize the results proved in \cite{b} (in which we recall that $p=2$ and $\theta=0$). \\

Without the aim to be complete, we refer to various developments of the paper \cite{bf} in which the equations are defined in $\mathbb{R}^3$ and the right hand side of the first equation of \eqref{pbbf} is replaced with a nonlinear function $g(x,u)$ with polynomial growth in $u$ (see e.g. \cite{a}, \cite{cv}, \cite{c}, \cite{dm}, \cite{hy}, \cite{kr}, \cite{r}). \\
As concerns semilinear elliptic systems we refer to \cite{df}, where the author proves existence, multiplicity and symmetry of solutions. In the case of elliptic systems with singular lower order terms see \cite{bo2}, \cite{dos}. \\

The paper is organized as follows. In Section \ref{Sec2} we deal with a regular datum for the first equation in \eqref{pbp}. We define the following functional 
$$J(z,\eta)=\frac{1}{p}\into |\D z|^p-\frac{A(\theta+1)}{pr}\into |\D\eta|^p+\frac{A}{r}\into (\eta^+)^{\theta+1}|z|^r-\into fz,$$
and we prove existence of a saddle point $(u,\varphi)$ of $J$ in $\sob\times\sob$ which is a weak solution of \eqref{pbp}. \\
In Section \ref{Sec3} we provide the approximation scheme that gives us estimates in the case $\theta=0$ and, by these estimates, we prove that there exists a solution in $\sob\times\sob$ of the system \eqref{pbp1} with $f$ possibly not belonging to the dual space. We give also a summability result on the solution $u$ of the first equation. \\
Section \ref{Sec4} is devoted to the case $0<\theta<p-1$. Once again by an approximation scheme we prove estimates that allow us to pass to the limit in the approximate equations and to prove the existence of a weak solution of \eqref{pbp}, with the datum $f$ in the dual space.

\section{\textbf{Regular data}} 
\label{Sec2}

Let us firstly prove the existence of a weak solution $(u,\varphi)$ of \eqref{pbp} with data $f$ in $L^m(\Omega)$, $m>\frac{N}{p}$. This solution is a saddle point of a functional defined on $\sob\times\sob$.
\begin{prop}
\label{p1}
Let $f$ in $L^{m}(\Omega)$, with $m>\frac{N}{p}$, and let $A>0$, $r>1$ and $0\leq\theta<p-1$. Then there exists a weak solution $(u,\varphi)$ of \eqref{pbp}. Moreover, $u$ and $\varphi$ are in $L^\infty(\Omega)$, $\varphi\geq 0$ and $(u,\varphi)$ is a saddle point of the functional defined on $W^{1,p}_{0}(\Omega) \times W^{1,p}_{0}(\Omega)$ as
\begin{equation}
\label{fun}
J(z,\eta)=
\begin{cases}
\frac{1}{p}\into |\D z|^p-\frac{A(\theta+1)}{pr}\into |\D\eta|^p+\frac{A}{r}\into (\eta^+)^{\theta+1}|z|^r-\into fz & \mathrm{if } \into (\eta^+)^{\theta+1}|z|^r<+\infty, \\
+\infty & \mathrm{otherwise}.
\end{cases}
\end{equation}
\end{prop}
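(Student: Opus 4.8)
The plan is to realize $(u,\varphi)$ as a saddle point of $J$ by first solving the second (``Maxwell'') equation for a fixed first component and then reducing $J$ to a functional of a single variable. For fixed $z\in\sob$ I would study $\eta\mapsto J(z,\eta)$: since $0\le\theta<p-1$, the leading term $-\frac{A(\theta+1)}{pr}\into|\D\eta|^p$ has degree $p$ while the coupling term $\frac{A}{r}\into(\eta^+)^{\theta+1}|z|^r$ grows only like $\|\eta\|^{\theta+1}$ with $\theta+1<p$, so $J(z,\cdot)$ is anti\-coercive and, by the compact embedding $\sob\hookrightarrow\hookrightarrow \lr$, weakly upper semicontinuous; hence it attains a maximum $\Phi(z)$, which may be taken nonnegative since $J(z,\eta^+)\ge J(z,\eta)$. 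Testing its Euler--Lagrange equation $-\Delta_p\Phi(z)=(\Phi(z)^+)^\theta|z|^r$ with the negative part confirms $\Phi(z)\ge0$, so $\Phi(z)$ solves $-\Delta_p\Phi(z)=|z|^r\Phi(z)^\theta$; uniqueness of the nonnegative solution of this sublinear problem (this is exactly where $0\le\theta<p-1$ enters) shows that the maximizer is unique.

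Next I would introduce the reduced functional $\tilde J(z):=J(z,\Phi(z))=\max_{\eta}J(z,\eta)$. Testing the Maxwell equation with $\Phi(z)$ gives the identity $\into|\D\Phi(z)|^p=\into\Phi(z)^{\theta+1}|z|^r$, whence
\[
\tilde J(z)=\frac1p\into|\D z|^p+\frac{A}{r}\,\frac{p-\theta-1}{p}\into|\D\Phi(z)|^p-\into fz .
\]
Because $p-\theta-1>0$ the middle term is nonnegative, so $\tilde J$ is coercive on $\sob$. For lower semicontinuity I would avoid differentiating the nonlinear map $z\mapsto\Phi(z)$ and instead use that $\tilde J=\sup_{\eta}J(\cdot,\eta)$ is a supremum of functionals each convex and (strongly) continuous in $z$ (the coupling term $\into(\eta^+)^{\theta+1}|z|^r$ is convex in $z$ because $r>1$), hence weakly lower semicontinuous; a supremum of weakly lsc functionals is weakly lsc. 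The direct method then furnishes a minimizer $u$ of $\tilde J$, and I set $\varphi:=\Phi(u)\ge0$, which solves the second equation of \eqref{pbp} by construction.

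It then remains to recover the first equation and the saddle inequalities. Since the inner maximum defining $\tilde J(u)$ is attained at the \emph{unique} point $\varphi$, the envelope (Danskin) theorem gives $\tilde J'(u)=\partial_z J(u,\varphi)$, so minimality of $u$ yields $\partial_z J(u,\varphi)=0$, i.e. $-\Delta_p u+A\varphi^{\theta+1}|u|^{r-2}u=f$, and $(u,\varphi)$ is a weak solution of \eqref{pbp}. By construction $\varphi$ maximizes $J(u,\cdot)$, giving $J(u,\eta)\le J(u,\varphi)$; on the other hand $J(\cdot,\varphi)$ is strictly convex and coercive and $u$ is one of its critical points, hence its global minimizer, giving $J(u,\varphi)\le J(z,\varphi)$. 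These two inequalities are precisely the saddle-point property.

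Finally I would prove the $\linf$ bounds by the Stampacchia level-set method. Testing the first equation with the truncation $G_k(u)=(|u|-k)^+\mathrm{sign}(u)$ and using $\varphi\ge0$, the lower order term $A\into\varphi^{\theta+1}|u|^{r-2}u\,G_k(u)\ge0$ is discarded, leaving $\into|\D G_k(u)|^p\le\into f\,G_k(u)$, whence $u\in\linf$ precisely because $m>\frac{N}{p}$; then $|u|^r\in\linf$ and the same argument applied to $-\Delta_p\varphi=|u|^r\varphi^\theta$ (sublinear since $\theta<p-1$) gives $\varphi\in\linf$. I expect the main obstacle to lie in the reduction step: establishing that the maximizer $\Phi(z)$ is well defined, nonnegative and \emph{unique} (which is what legitimizes both the representation $\tilde J=\max_\eta J$ and the envelope-theorem passage to the first equation), together with the weak lower semicontinuity of $\tilde J$; writing $\tilde J$ as a supremum of convex functionals and exploiting the compact Sobolev embedding to control the coupling term is the device I would rely on to circumvent the lack of continuity of $z\mapsto\Phi(z)$.
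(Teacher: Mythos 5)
Your reduction strategy---solve the second equation for fixed $z$, then minimize the reduced functional $\tilde J(z)=J(z,\Phi(z))$ and recover the first equation by an envelope theorem---is the Benci--Fortunato route, and it is \emph{not} what the paper does: the paper fixes $\psi$, minimizes $z\mapsto J(z,\psi)$ to get $v=S(\psi)$ (with the Stampacchia bound $\|v\|_{\sob}+\|v\|_{\linf}\leq C\|f\|^{1/(p-1)}_{L^m(\Omega)}$, which uses $m>\frac{N}{p}$), maximizes $\eta\mapsto J(v,\eta)$ to get $\zeta=T(v)$ (identified via Brezis--Oswald \cite{bros} as the unique nonnegative solution of the sublinear equation, as in your first step), and then produces $\varphi=T(S(\varphi))$ by \emph{Schauder's fixed point theorem}, never differentiating the composed map. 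Measured against this, your proposal has two genuine gaps, located exactly where the paper's machinery is doing the work. First, $\Phi(z)$ and $\tilde J(z)$ are not well defined for arbitrary $z\in\sob$: your anticoercivity claim that the coupling term grows like $\|\eta\|^{\theta+1}$ rests on the estimate $\into(\eta^+)^{\theta+1}|z|^r\leq\||z|^r\|_{L^{(p^*/(\theta+1))'}(\Omega)}\,\|\eta^+\|^{\theta+1}_{L^{p^*}(\Omega)}$, which requires $|z|^r\in L^{(p^*/(\theta+1))'}(\Omega)$ and fails for general $z\in\sob$ once $r$ is large (indeed $J(z,\eta)=+\infty$ genuinely occurs, e.g. for $z(x)=|x|^{-\alpha}$ with $\alpha<\frac{N-p}{p}$, $r\alpha\geq N$, and $\eta$ bounded away from zero near the singularity); the same integrability is needed even for the weak upper semicontinuity of $J(z,\cdot)$. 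The hypothesis $m>\frac{N}{p}$ is used in the paper precisely to keep every iterate in $\linf$ so that all coupling integrals are finite, whereas in your scheme $u\in\linf$ is only derived \emph{after} the first equation, which you obtain \emph{before} knowing any boundedness of $u$---a circularity.

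Second, and more seriously, the Danskin step is unjustified in this setting. Envelope theorems require, beyond uniqueness of the maximizer, a compactness or continuity property of the maximization: classically a maximum over a compact set, or enough continuity of $z\mapsto\Phi(z)$ that $\partial_z J(u+th,\Phi(u+th))[h]\to\partial_z J(u,\Phi(u))[h]$ as $t\to0$, together with the integrability of $\varphi^{\theta+1}|u|^{r-1}|h|$ needed even to write $\partial_z J(u,\varphi)$. Here the maximization runs over all of $\sob$, which is not compact, and for $p\neq 2$, $\theta>0$ the solution map of $-\Delta_p\zeta=|z|^r\zeta^\theta$ is not known to be differentiable; even its \emph{continuity} is nontrivial and is exactly what the paper proves, upgrading weak to strong $\sob$-convergence via the a.e.\ gradient convergence of \cite{bm}, Vitali's theorem and the energy identity $\into|\D\zeta_n|^p=\into|v_n|^r\zeta_n^{\theta+1}$, then invoking Lemma 5 of \cite{bmp}. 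So your claim to ``avoid differentiating the nonlinear map $z\mapsto\Phi(z)$'' holds only for the semicontinuity part (where your sup-of-convex argument is fine, modulo replacing ``strongly continuous'' by ``strongly lsc via Fatou''); the passage from minimality of $u$ to the first Euler--Lagrange equation hides the same continuity analysis. For $p=2$, $\theta=0$ the second equation is linear and your argument is essentially Boccardo's in \cite{b}; in the general quasilinear sublinear case the envelope step must be replaced by something like the paper's fixed-point construction. Your remaining ingredients---nonnegativity and uniqueness of the maximizer via \cite{bros}, the identity $\tilde J(z)=\frac1p\into|\D z|^p+\frac{A(p-\theta-1)}{rp}\into|\D\Phi(z)|^p-\into fz$, and the final Stampacchia-type $\linf$ bounds---are correct and coincide with the paper's.
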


\begin{proof}
Fix $\psi\in\sob$ and let $I_1$ be the functional defined on $\sob$ as $I_1(z):=J(z,\psi)$. We have, by H\"older's inequality and denoting by $C_s$ the constant of the Sobolev embedding theorem, that  
\begin{equation*}
I_1(z)\geq \frac{1}{p}\|z\|^p_{\sob}-\frac{A(\theta+1)}{pr}\|\psi\|^p_{\sob}-C_s\|f\|_{L^{(p^*)'}(\Omega)}\|z\|_{\sob}.
\end{equation*}
This implies that $I_{1}$ is coercive. Now we prove that $I_1$ is weakly lower semicontinuous, which is that if $z_n\rightharpoonup z$ in $\sob$ then
\begin{equation}
\label{sem1}
I_1(z)\leq \liminf_{n\to\infty}I_1(z_n).
\end{equation}
Since $f\in L^m(\Omega)\subset L^{(p^*)'}(\Omega)$ we have that that
\begin{equation*}
\lim_{n\to\infty}\into fz_n=\into fz.
\end{equation*}
As a consequence of Fatou's lemma, it also yields
\begin{equation*}
\frac{A}{r}\into (\psi^+)^{\theta+1}|z|^r\leq \liminf_{n\to\infty}\frac{A}{r}\into (\psi^+)^{\theta+1}|z_n|^r.
\end{equation*}
Then, by the weakly lower semicontinuity of the norm, we deduce \eqref{sem1}. Hence there exists a minimum $v$ of $I_1$ on $\sob$. Moreover, by the classical theory of elliptic equations, $v$ is the unique weak solution of the Euler-Lagrange equation 
\begin{equation}
\label{eq1}
-\div(|\D v|^{p-2}\D v)+A(\psi^+)^{\theta+1}|v|^{r-2}v=f, \quad v\in\sob.
\end{equation}
We have, thanks to the results in \cite{s}, that
\begin{equation}
\label{st1}
\|v\|_{\sob}+\|v\|_{L^{\infty}(\Omega)}\leq C_1\|f\|_{L^{m}(\Omega)}^{\frac{1}{p-1}},
\end{equation}
where $C_1$ is a positive constant not depending on $f$. We define $S:\sob\to\sob$ as the operator such that $v=S(\psi)$. Now we consider the functional defined on $\sob$ as $I_2(\eta):=J(v,\eta)$. As before, since $\theta<p-1$, we have that $-I_2$ is coercive and weakly lower semicontinuous. Then there exists a minimum $\zeta$ of $-I_2$, that is a maximum of $I_2$ on $\sob$. Let $I_3$ be a functional defined on $\sob$ as
$$I_3(\eta):=\frac{\theta+1}{p}\into |\D\eta|^p-\into (\eta^+)^{\theta+1}|v|^r.$$
Since $\zeta$ is a maximum of $I_2$, we have
\begin{align*}
\frac{A}{r}I_3(\zeta)&=-I_2(\zeta)+\frac{1}{p}\into |\D v|^p-\into fv \\
&\leq -I_2(\eta)+\frac{1}{p}\into |\D v|^p-\into fv=\frac{A}{r}I_3(\eta), \quad \forall \eta\in\sob,
\end{align*}
so that $\zeta$ is a minimum of $I_3$. We observe that $\zeta\geq0$ and $\zeta\not\equiv 0$ in $\Omega$. In fact we have 
\begin{align*}
I_3(\zeta)&=\frac{\theta+1}{p}\into |\D\zeta|^p-\into (\zeta^+)^{\theta+1}|v|^r\leq \frac{\theta+1}{p}\into |\D\zeta^+|^p-\into (\zeta^+)^{\theta+1}|v|^r=I_3(\zeta^+),
\end{align*}
then $\|\zeta\|_{\sob}\leq \|\zeta^{+}\|_{\sob}$ and so $\zeta^-$ is zero almost everywhere in $\Omega$. Now we show that $\zeta\not\equiv 0$. We consider $\lambda_1$ to be the first eigenvalue of $-\Delta_p$ while $\vp_1$ in $\sob$ is the associated eigenfunction, that is
\begin{equation*}
\begin{cases}
-\div(|\D \vp_1|^{p-2}\D \vp_1)=\lambda_1 |\vp_1|^{p-2}\vp_1 \quad &\text{in } \Omega, \\
\vp_1>0 \quad &\text{in } \Omega, \\
\vp_1=0 \quad &\text{on } \partial\Omega.
\end{cases}
\end{equation*}
Let $t>0$; computing $I_3$ in $t\vp_1$, we obtain
\begin{align*}
I_3(t\vp_1)&=\frac{(\theta+1)t^p}{p}\into |\D\vp_1|^p-t^{\theta+1}\into\vp_1^{\theta+1}|v|^r \\
&=\frac{(\theta+1)\lambda_1t^p}{p}\into \vp_1^p-t^{\theta+1}\into \vp_1^{\theta+1}|v|^r=c_1t^p-c_2t^{\theta+1},
\end{align*}
where $\dis c_1:=\frac{(\theta+1)\lambda_1}{p}\into \vp_1^p \in(0,+\infty)$ and $\dis c_2:=\into \vp_1^{\theta+1}|v|^r\in(0,+\infty]$. By taking $t$ such that $\dis c_1 t^{p-\theta-1}-c_2<0$, that is $\dis t<\left(\frac{c_2}{c_1}\right)^{\frac{1}{p-\theta-1}}$, we have $I_3(t\vp_1)<0$. Then $I_3(\zeta)<0=I_3(0)$ and $\zeta\not\equiv 0$. Since $\zeta$ is a nonnegative minimum of $I_3$, thanks to the results in \cite{bros}, it is the unique weak solution of the Euler-Lagrange equation 
\begin{equation}
\label{eq2}
-\div(|\D\zeta|^{p-2}\D\zeta)=|v|^r\zeta^\theta, \quad \zeta\in\sob.
\end{equation}
Following \cite{bo1}, we have that
\begin{equation}
\label{st2}
\|\zeta\|_{\sob}+\|\zeta\|_{\linf}\leq C_2\|v\|_{\linf}^{\frac{r}{p-\theta-1}},
\end{equation}
and we deduce, using \eqref{st1}, that
\begin{equation}
\label{st3}
\|\zeta\|_ {\sob}+\|\zeta\|_{\linf}\leq C\|f\|_{L^{m}(\Omega)}^{\frac{r}{(p-1)(p-\theta-1)}}=:R,
\end{equation}
where $C$ and $C_2$ are positive constants not depending on $f$ and $v$. Now we define $T:\sob\to\sob$ as the operator such that $\zeta=T(v)=T(S(\psi))$. We want to prove that $T\circ S$ has a fixed point by Schauder's fixed point theorem. By \eqref{st3} we have that $\overline{B_R(0)}\subset \sob$ is invariant for $T\circ S$. Let $\{\psi_n\}\subset\sob$ be a sequence weakly convergent to some $\psi$ and let $v_n=S(\psi_n)$. As a consequence of \eqref{st1}, there exists a subsequence indexed by $v_{n_k}$ such that
\begin{equation}
\label{con1}
\begin{array}{l}
v_{\nk} \rightarrow v \text{ weakly in } \sob, \text{ and a.e. in } \Omega, \\
v_{\nk} \rightarrow v \text{ weakly-* in } \linf. 
\end{array}
\end{equation}
Moreover, we have 
$$-\div(|\D v_{\nk}|^{p-2}\D v_{\nk})=f-A(\psi_{\nk}^+)^{\theta+1}|v_{\nk}|^{r-2}v_{\nk}=:g_{\nk},$$
and, using H\"{o}lder's inequality, the Poincar\'e inequality and \eqref{st1}, we obtain
$$\|g_{\nk}\|_{\luno}\leq \|f\|_{\luno}+A\|v_{\nk}\|_{\linf}^{r-1}\|\psi_{\nk}\|_{L^{\theta+1}(\Omega)}^{\theta+1} \leq \|f\|_{\luno} +AC_1\|f\|_{L^m(\Omega)}^{\frac{r-1}{p-1}}\|\psi_n\|_{\sob}^{\theta+1}\leq C.$$
Then, by Theorem 2.1 in \cite{bm}, we obtain that $\D v_{\nk}$ converges to $\D v$ almost everywhere in $\Omega$. Since 
$$\||\D v_{\nk}|^{p-2}\D v_{\nk}\|_{\lp'n}=\|v_{\nk}\|_{\sob}^{p-1}\leq C_1\|f\|_{L^{m}(\Omega)},$$
we deduce that 
\begin{equation}
\label{con2}
|\D v_{\nk}|^{p-2}\D v_{\nk} \rightarrow |\D v|^{p-2}\D v \text{ weakly in } \lp'n.
\end{equation}
We recall that $v_{\nk}$ satisfies
\begin{equation*}
\into |\D v_{\nk}|^{p-2}\D v_{\nk} \cdot\D w+A\into (\psi_{\nk}^+)^{\theta+1}|v_{\nk}|^{r-2}v_{\nk} w=\into fw, \quad \forall w\in\sob.
\end{equation*}
Letting $k$ tend to infinity, by \eqref{con1}, \eqref{con2} and Vitali's theorem, we have that
\begin{equation*}
\into |\D v|^{p-2}\D v \cdot\D w+A\into (\psi^+)^{\theta+1}|v|^{r-2}v w=\into fw, \quad \forall w\in\sob,
\end{equation*}
so that $v$ is the unique weak solution of \eqref{eq1} and it does not depend on the subsequence. Hence $v_n=S(\psi_n)$ converges to $v=S(\psi)$ weakly in $\sob$ and weakly-* in $\linf$. Then 
\begin{equation}
\label{con3}
|v_n|^r \rightarrow |v|^r \text{ strongly in } L^q(\Omega)\text{ } \forall q<+\infty \text{ and } \||v_n|^r\zeta_n^\theta\|_{\luno}\leq C.
\end{equation}
Using \eqref{st3}, \eqref{con3} and proceeding in the same way, we obtain that 
\begin{align}
\label{con4}
\begin{array}{l}
\zeta_n=T(v_n) \rightarrow \zeta=T(v) \text{ weakly in } \sob, \text{ and weakly-* in }\linf,\\
|\D\zeta_n|^{p-2}\D\zeta_n \rightarrow |\D\zeta|^{p-2}\D\zeta \text{ weakly in }\lp'n, 
\end{array}
\end{align}
and $\zeta$ is the unique weak solution of \eqref{eq2}. Now we want to prove that $\zeta_n$ converges to $\zeta$ strongly in $\sob$. In order to obtain this, by Lemma 5 in \cite{bmp}, it is sufficient to prove the following
\begin{equation}
\label{lim}
\lim_{n\to\infty}\into \left(|\D \zeta_n|^{p-2}\D\zeta_n-|\D\zeta|^{p-2}\D\zeta\right)\cdot\D\left(\zeta_n-\zeta\right)=0.
\end{equation}
We have that
\begin{align}
\label{lim2}
\into \left(|\D \zeta_n|^{p-2}\D\zeta_n-|\D\zeta|^{p-2}\D\zeta\right)\cdot\D\left(\zeta_n-\zeta\right)&=\into |\D\zeta_n|^p-\into |\D\zeta|^{p-2}\D\zeta\cdot\D\zeta_n \\
&-\into |\D\zeta_n|^{p-2}\D\zeta_n\cdot\D\zeta+\|\zeta\|_{\sob}^p \nonumber.
\end{align}
The second and the third term on the right hand side of \eqref{lim2} converge, by \eqref{con4}, to $\|\zeta\|_{\sob}^p$. Then it is sufficient to prove that
\begin{equation}
\label{lim3}
\lim_{n\to\infty} \|\zeta_n\|_{\sob}^p=\|\zeta\|_{\sob}^p.
\end{equation}
Since $\zeta_n$ is equal to $T(v_n)\geq0$, we have that
\begin{equation*}
\into |\D\zeta_n|^p=\into |v_n|^r\zeta_n^{\theta+1}.
\end{equation*}
By \eqref{con3} and Vitali's theorem, we deduce that
$$\lim_{n\to\infty}\into |v_n|^r\zeta_n^{\theta+1}=\into |v|^r\zeta^{\theta+1}=\|\zeta\|_{\sob}^p,$$
so that \eqref{lim3} is true and \eqref{lim} is proved. Hence we have proved that if $\psi_n$ converges to $\psi$ weakly in $\sob$ then $\zeta_n=T(S(\psi_n))$ converges to $\zeta=T(S(\psi))$ strongly in $\sob$. As a consequence we have that $T\circ S$ is a continuous operator and that $T(S(\overline{B_R(0)}))\subset\sob$ is a compact subset. Then there exists, by Schauder's fixed point theorem, a function $\varphi$ in $\sob$ such that $\varphi=T(S(\varphi))$ and, since $T(v)\geq 0$ for every $v$ in $\sob$, $\vp$ is nonnegative. Moreover let $u=S(\varphi)$, we have that $u$ is a minimum for $I_1$ and $\varphi$ is a maximum for $I_2$. Hence $(u,\varphi)$ is a saddle point of $J$ defined by \eqref{fun} and a weak solution of \eqref{pbp}.
\end{proof}

\section{\textbf{Existence and regularizing effect in the case $\theta=0$}} 
\label{Sec3}

In this section we assume $\theta=0$ and we study the regularizing effect on the existence of finite energy solutions of both equations even if the data do not belong to the dual space. We recall that the assumption on $\theta$ implies that we deal with the system \eqref{pbp1}. \\
We consider the datum $f$ in $\lr1d$ and a sequence $\{f_n\}$ such that 
\begin{equation*}
f_n\in\linf \text{, } |f_n|\leq |f| \text{ }\forall n\in\NN \text{ and } f_n\rightarrow f \text{ strongly in } \lr1d.
\end{equation*}
By Proposition \ref{p1}, there exists $(u_n,\vp_n)$ in $\sob \times \sob$ that satisfies
\begin{equation}
\label{papp1}
\begin{cases}
-\div(|\D u_n|^{p-2}\D u_n)+A\vp_n|u_n|^{r-2}u_n=f_n, \quad &(i),\\
-\div(|\D\varphi_n|^{p-2}\D\varphi_n)=|u_n|^r, \quad &(ii),
\end{cases}
\end{equation}
with $\vp_n\geq 0$, $u_n$ and $\vp_n$ in $L^\infty(\Omega)$. Choosing $u_n$ as test function in $(i)$ and $\vp_n$ in $(ii)$ of \eqref{papp1} we have
\begin{equation*}
\into |\D u_n|^p+A\into \vp_n |u_n|^r=\into f_nu_n, \qquad \into |\D \vp_n|^p=\into |u_n|^r\vp_n.
\end{equation*}
Then
\begin{equation}
\label{s0r}
\into |\D u_n|^p+\into |\D \vp_n|^p \leq C\into f_nu_n.
\end{equation}
Choosing $u_n^+=u_n \chi_{\{u_n\geq 0\}}$ as test function in $(ii)$ we obtain
\begin{equation}
\label{s1r}
\into |\D\vp_n|^{p-2}\D\vp_n\cdot\D u_n^+=\into |u_n|^ru_n^+=\into |u_n^+|^{r+1}.
\end{equation}
For the term on the left hand side of \eqref{s1r} we have, by Young's inequality and \eqref{s0r}, that
\begin{align}
\label{s2r}
&\into |\D\vp_n|^{p-2}\D\vp_n\cdot\D u_n^+\leq \frac{1}{p'}\into |\D\vp_n|^p+ \frac{1}{p}\into |\D u_n^+|^p \\
&\leq \frac{1}{p'}\into |\D\vp_n|^p+ \frac{1}{p}\into |\D u_n|^p \leq C\into f_n u_n.\nonumber
\end{align}
Putting together \eqref{s1r} and \eqref{s2r}, we obtain
\begin{equation*}
\into |u_n^+|^{r+1}\leq C\into f_n u_n.
\end{equation*}
In the same way, using $u_n^-=-u_n\chi_{\{u_n<0\}}$ as test function in $(ii)$, we have
$$\into |u_n^-|^{r+1}\leq C\into f_n u_n,$$
so that
\begin{equation}
\label{s3r}
\into |u_n|^{r+1}=\into |u_n^+|^{r+1} +\into |u_n^-|^{r+1} \leq C\into f_nu_n \leq C\into |f||u_n|.
\end{equation}
Then, applying H\"older inequality to the right hand side of \eqref{s3r} with exponents $(r+1)'$ and $r+1$, we deduce 
\begin{equation}
\label{s4r}
\|u_n\|_{\lru} \leq C\|f\|_{\lr1d}^{\frac{1}{r}}.
\end{equation}
This implies, by \eqref{s0r} and H\"older's inequality, that
\begin{equation}
\label{s5r}
\into |\D u_n|^p+\into |\D \vp_n|^p \leq C \|f\|_{\lr1d} \|u_n\|_{\lru} \leq C \|f\|_{\lr1d}^{\frac{r+1}{r}},
\end{equation}
and
\begin{equation}
\label{s6r}
\into \vp_n |u_n|^r\leq C \|f\|_{\lr1d}^{\frac{r+1}{r}}.
\end{equation}

As a consequence of \eqref{s4r}, \eqref{s5r} and \eqref{s6r}, we have the following lemma.
\begin{lemma}
\label{lemr}
Let $f$ in $\lr1d$, and let $A>0$ and $r>1$. Then the weak solution $(u_n,\vp_n)$ of \eqref{papp1} is such that
\begin{equation*}
\|u_n\|_{\lru}+\|u_n\|_{\sob}+\|\vp_n\|_{\sob}+\into \vp_n |u_n|^r\leq C(f),
\end{equation*}
where $C(f)$ is a positive constant depending only on $\dis \|f\|_{\lr1d}$.
\end{lemma}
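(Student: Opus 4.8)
The plan is to exploit the coupling between the two equations of \eqref{papp1}, whose zero-order terms carry favourable signs thanks to $A>0$ and $\vp_n\geq 0$, in order to obtain a priori bounds uniform in $n$. First I would test equation $(i)$ with $u_n$ itself and equation $(ii)$ with $\vp_n$; the two coupling terms $A\into\vp_n|u_n|^r$ and $\into|u_n|^r\vp_n$ are nonnegative and coincide, so adding the two identities and discarding the nonnegative coupling contribution yields the basic energy inequality \eqref{s0r}, which controls both Dirichlet energies $\into|\D u_n|^p$ and $\into|\D\vp_n|^p$ by $\into f_n u_n$.

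The decisive step---and the one I expect to be the real point of the argument---is to transfer the coercivity hidden in the \emph{second} equation onto $u_n$, so as to bound $u_n$ in $\lru$ by $f$ measured only in the non-dual space $\lr1d$. To this end I would test $(ii)$ with $u_n^+$ and then with $u_n^-$: since the right-hand side of $(ii)$ is $|u_n|^r$, this produces precisely $\into|u_n^{\pm}|^{r+1}$, while the left-hand side $\into|\D\vp_n|^{p-2}\D\vp_n\cdot\D u_n^{\pm}$ is absorbed by Young's inequality into the two Dirichlet energies and hence, via \eqref{s0r}, is again controlled by $\into f_n u_n$ (this is \eqref{s1r}--\eqref{s2r}). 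Summing the two contributions gives \eqref{s3r}, namely $\into|u_n|^{r+1}\leq C\into|f||u_n|$. Applying H\"older's inequality on the right with the conjugate exponents $(r+1)'$ and $r+1$ and then dividing, the $\lru$ norm of $u_n$ absorbs one power of itself and one is left with the self-improving bound \eqref{s4r}, $\|u_n\|_{\lru}\leq C\|f\|_{\lr1d}^{1/r}$, in which the datum enters only through $\lr1d$.

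Finally I would close the loop: inserting \eqref{s4r} back into \eqref{s0r} and using H\"older once more bounds $\into f_n u_n\leq\|f\|_{\lr1d}\|u_n\|_{\lru}$, which gives the uniform Dirichlet estimates \eqref{s5r} for both $u_n$ and $\vp_n$ in $\sob$, and simultaneously the bound \eqref{s6r} on the coupling term $\into\vp_n|u_n|^r$. Collecting \eqref{s4r}, \eqref{s5r} and \eqref{s6r} produces the four stated quantities, and since $|f_n|\leq|f|$ for every $n$ the constant depends only on $\|f\|_{\lr1d}$, as claimed. The one point worth verifying is that all the test functions employed ($u_n$, $u_n^{\pm}$, $\vp_n$) are admissible, which is guaranteed because Proposition \ref{p1} furnishes $u_n,\vp_n\in\sob\cap\linf$.
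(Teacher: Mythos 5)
Your proof is correct and follows the paper's own argument essentially step for step: the same test functions ($u_n$ in $(i)$ and $\vp_n$ in $(ii)$ for the energy identities, then $u_n^{+}$ and $u_n^{-}$ in $(ii)$ combined with Young's inequality to produce $\into|u_n|^{r+1}\leq C\into|f||u_n|$, then H\"older with exponents $(r+1)'$ and $r+1$ to self-improve into \eqref{s4r}, and finally substitution back into \eqref{s0r}). The one imprecise point is your derivation of \eqref{s0r}: literally adding the two identities and ``discarding the nonnegative coupling contribution'' only works when $A\geq 1$, because the coupling term enters the two identities on opposite sides with coefficients $A$ and $1$, so after addition one is left with $(A-1)\into\vp_n|u_n|^r$, which has no sign for $A<1$. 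The correct (and trivial) fix, which is what the paper does implicitly, is to observe that the first identity alone yields both $\into|\D u_n|^p\leq\into f_nu_n$ and $A\into\vp_n|u_n|^r\leq\into f_nu_n$, and to substitute the latter into the second identity, giving \eqref{s0r} with a constant $C=1+1/A$ depending on $A$.
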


The above lemma implies that there exist subsequences still indexed by $u_n$ and $\vp_n$ and functions $u$ and $\vp$ belonging to $\sob$ such that
\begin{align}
\label{conr}
&u_n \rightarrow u \text{ weakly in } \sob, \text{ and a.e. in } \Omega, \nonumber \\
&u_n \rightarrow u \text{ weakly in } \lru, \text{ and strongly in } L^{q}(\Omega)\text{ } \forall q<\max\{r+1,p^*\}, \\
&\vp_n \rightarrow \vp \text{ weakly in } \sob, \text{ and a.e. in } \Omega. \nonumber
\end{align}
By applying these convergence results, we can prove the following existence theorem. 
\begin{theorem}
\label{teo2}
Let $A>0$, and let $r>1$ and $f$ in $L^m(\Omega)$, with $m\geq (r+1)'$. Then there exists a weak solution $(u,\vp)$ of system \eqref{pbp1}, with $u$ and $\vp$ in $\sob$.
\end{theorem}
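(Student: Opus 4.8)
The plan is to pass to the limit in the approximate system \eqref{papp1} using the compactness furnished by Lemma \ref{lemr} and the convergences \eqref{conr}. The weak solution will be understood in the sense that $u,\vp\in\sob$ and the two equations hold when tested against functions in $\sob\cap\linf$; indeed, the datum $|u|^r$ of the second equation need not lie in the dual space $\dualsob$ in the regularizing regime, so bounded test functions are the natural choice. From Lemma \ref{lemr} and \eqref{conr} I would first record the weak limits $u,\vp$ in $\sob$, the a.e. convergence of $u_n$ and $\vp_n$, and the strong convergence $u_n\to u$ in $L^q(\Omega)$ for every $q<\max\{r+1,p^*\}$.

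The crucial preliminary step is the a.e. convergence of the gradients, $\D u_n\to\D u$ and $\D\vp_n\to\D\vp$ in $\Omega$. To invoke Theorem 2.1 in \cite{bm}, exactly as in the proof of Proposition \ref{p1}, I must check that the right-hand sides of the equations written as $-\div(|\D u_n|^{p-2}\D u_n)=f_n-A\vp_n|u_n|^{r-2}u_n$ and $-\div(|\D\vp_n|^{p-2}\D\vp_n)=|u_n|^r$ are bounded in $\luno$. For the second this is immediate, since $u_n$ is bounded in $\lru$. For the first, $\|f_n\|_{\luno}\leq\|f\|_{\luno}$, while the bound on $\into\vp_n|u_n|^{r-1}$ follows by splitting $\Omega$ into $\{|u_n|\leq1\}$ and $\{|u_n|>1\}$: on the former $\vp_n|u_n|^{r-1}\leq\vp_n$, bounded in $\luno$ because $\vp_n$ is bounded in $\sob\subset L^{p^*}(\Omega)$, and on the latter $\vp_n|u_n|^{r-1}\leq\vp_n|u_n|^r$, controlled by \eqref{s6r}. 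Granted the a.e. gradient convergence, the uniform bounds in $(L^{p'}(\Omega))^N$ then yield $|\D u_n|^{p-2}\D u_n\rightharpoonup|\D u|^{p-2}\D u$ and $|\D\vp_n|^{p-2}\D\vp_n\rightharpoonup|\D\vp|^{p-2}\D\vp$ weakly in $(L^{p'}(\Omega))^N$.

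It then remains to pass to the limit in the lower-order terms. For the second equation I would show $|u_n|^r\to|u|^r$ strongly in $\luno$: since $r<\max\{r+1,p^*\}$, the sequence $u_n$ converges strongly in $L^{r+\eps}(\Omega)$ for some small $\eps>0$, so $|u_n|^r$ is bounded in $L^{(r+\eps)/r}(\Omega)$ and converges a.e., whence Vitali's theorem gives $L^1$ convergence. For the first equation I would prove $\vp_n|u_n|^{r-2}u_n\to\vp|u|^{r-2}u$ in $\luno$, again by Vitali: the a.e. convergence is clear, and the equi-integrability of $\vp_n|u_n|^{r-1}$ follows from the same splitting, using that $\int_{\{|u_n|>k\}}\vp_n|u_n|^{r-1}\leq k^{-1}\into\vp_n|u_n|^r\leq C/k$, while on $\{|u_n|\leq k\}$ the term is dominated by $k^{r-1}\vp_n$, whose integral over a small set is uniformly small by the $L^{p^*}(\Omega)$ bound on $\vp_n$. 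Testing both approximate equations against an arbitrary $w\in\sob\cap\linf$ and combining the weak gradient convergences with these $L^1$ convergences and with $f_n\to f$ (from $|f_n|\leq|f|$ and dominated convergence), I can pass to the limit termwise and conclude that $(u,\vp)$ solves \eqref{pbp1} weakly, with $u,\vp\in\sob$.

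The main obstacle I anticipate is the treatment of the coupling term $\vp_n|u_n|^{r-2}u_n$ in the first equation: only $\into\vp_n|u_n|^r$ is controlled a priori, whereas the term to be passed to the limit carries the lower power $|u_n|^{r-1}$, so both the $\luno$ bound needed for Theorem 2.1 in \cite{bm} and the equi-integrability needed for Vitali must be extracted by the level-set splitting above, balancing the contribution on $\{|u_n|>k\}$ against the energy bound \eqref{s6r} and the contribution on $\{|u_n|\leq k\}$ against the higher integrability of $\vp_n$.
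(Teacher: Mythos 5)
Your proposal is correct, and its skeleton coincides with the paper's: the paper proves Theorem \ref{teo2} by rerunning the proof of Theorem \ref{teo1} in the case $\theta=0$, with Lemma \ref{lemd} and \eqref{cond} replaced by Lemma \ref{lemr} and \eqref{conr} (see Remark \ref{finalrm2}) --- that is, exactly your scheme of a.e.\ gradient convergence via Theorem 2.1 of \cite{bm} plus Vitali's theorem for the lower-order terms. There is, however, one step where you genuinely deviate, and your route is the more elementary one: for the equi-integrability of the coupling term $\vp_n|u_n|^{r-1}$, the paper goes back to the equation itself, testing the first approximate equation with $T_\eps(G_k(u_n))/\eps$ to obtain the tail bound \eqref{td10}, namely $\int_{\{|u_n|\geq k\}}\vp_n^{\theta+1}|u_n|^{r-1}\leq \frac{1}{A}\int_{\{|u_n|\geq k\}}|f|$, whose right-hand side is then made small uniformly in $n$ through the equi-integrability of $f$; you instead bound the tail directly by the a priori estimate of Lemma \ref{lemr}, writing $\int_{\{|u_n|>k\}}\vp_n|u_n|^{r-1}\leq k^{-1}\into\vp_n|u_n|^{r}\leq C(f)/k$. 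Both arguments are valid (the analogous bound \eqref{s3d} would have served the same purpose in the dual case); yours costs one line, while the paper's is independent of the bound on $\into\vp_n|u_n|^r$ and only consumes the integrability of $f$. The one point where you fall short of the paper is the notion of solution: you stop at test functions in $\sob\cap\linf$ and declare this to be the weak formulation, whereas the paper's solution satisfies both equations against every test function in $\sob$. This stronger conclusion is obtained, after the limit you performed, by inserting $T_k(\eta)$ for $\eta\in\sob$, letting $k\to\infty$, and using Lebesgue's theorem on the principal part together with Beppo Levi's theorem on the sign-split lower-order terms; this step is worth adding, since in the regularizing regime $r+1>p^*$ the finiteness of $\into|u|^r\eta$ for unbounded $\eta\in\sob$ is not a priori obvious and is itself part of what that argument proves.
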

The proof is a consequence of the proof of Theorem \ref{teo1} in the case $\theta=0$. We deduce, by Theorem \ref{teo2}, the regularizing effect for the solutions of \eqref{pbp1}. We assume 
\begin{equation}
\label{ip}
(r+1)'<(p^*)' \Leftrightarrow r>\displaystyle \frac{N(p-1)+p}{N-p} \quad\text{and} \quad f\in L^m(\Omega), \text{ with } m\geq (r+1)'.
\end{equation}
\begin{remark}
\label{main}
Under these assumptions we note that, if $m\geq (p^*)'$, thanks to the results in \cite{bg}, we have that $u$ belongs to $\sob\cap L^{t}(\Omega)$, with $t:=\displaystyle \frac{Nm(p-1)}{N-pm}$. Then, if $\displaystyle \frac{t}{r}<(p^*)'$, that is $\displaystyle m<m_1:=\frac{Npr}{N(p-1)^2+p(p-1)+p^2r}$, $\vp$ belongs to $\sob$ even if the datum of the second equation of \eqref{pbp1} does not belongs to the dual space. We verify that $m_1>(p^*)'$. Since 
$$m_1=\frac{pNr}{N(p-1)^2+p(p-1)+p^2r}> (p^*)'=\frac{Np}{N(p-1)+p} \Leftrightarrow r>p^*-1,$$
it follows thanks to \eqref{ip}. Moreover we have that, if $m<(p^*)'$ (i.e. the datum $f$ does not belong to $\dualsob$), then $u$ belongs to $\sob$. Hence we have a regularizing effect due to the system: the functions $u$ and $\vp$ belong to $\sob$ because of the coupling between the equations. This fact does not follow on being solutions of the single equations. 
\end{remark}

We now prove summability results for $u$. 
\begin{prop}
\label{p2}
Under the assumptions \eqref{ip}, the weak solution $u$ of \eqref{pbp1}, given by Theorem \ref{teo2}, belongs to $L^s(\Omega)$, with $\displaystyle s=\frac{m(pr+p-1)}{m(p-1)+1}$.
\end{prop}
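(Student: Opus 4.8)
The plan is to establish a uniform a priori bound $\|u_n\|_{L^s(\Omega)}\le C$ for the approximating solutions $(u_n,\vp_n)$ of \eqref{papp1}, and then to pass to the limit: since $u_n\to u$ almost everywhere in $\Omega$ by \eqref{conr}, Fatou's lemma will give $u\in L^s(\Omega)$. I note at the outset that the exponent $s$ does not depend on $N$, so no Sobolev embedding should enter; the summability must come entirely from the absorption term $A\vp_n|u_n|^{r-2}u_n$ together with the coupling to the second equation, the $p$-Laplacian structure of $(ii)$ being responsible for the factor $\frac{1}{p-1}$ hidden in $s$. All estimates will be performed on $u_n^+$; the argument for $u_n^-$ is identical, and adding the two gives the bound on $|u_n|$.

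Fix a parameter $\gamma\ge1$ to be chosen. Using $(u_n^+)^\gamma\in\sob$ (admissible since $u_n\in\linf$ by Proposition \ref{p1}) as test function in $(i)$ and discarding the nonnegative absorption term, I get
\begin{equation*}
\gamma\into (u_n^+)^{\gamma-1}|\D u_n^+|^p\le\into |f|(u_n^+)^\gamma.
\end{equation*}
The heart of the matter is the second equation. Testing $(ii)$ with $(u_n^+)^{\frac{\gamma+p-1}{p}}\in\sob$ gives
\begin{equation*}
\into (u_n^+)^{r+\frac{\gamma+p-1}{p}}=\frac{\gamma+p-1}{p}\into (u_n^+)^{\frac{\gamma-1}{p}}|\D\vp_n|^{p-2}\D\vp_n\cdot\D u_n^+,
\end{equation*}
and I would apply Young's inequality to the right-hand side in the form $|\D\vp_n|^{p-1}\cdot\big((u_n^+)^{\frac{\gamma-1}{p}}|\D u_n^+|\big)$ with exponents $p'$ and $p$. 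The crucial point is that this puts the whole weight onto the $u_n$-gradient factor and leaves $|\D\vp_n|^p$ unweighted, so that
\begin{equation*}
\into (u_n^+)^{r+\frac{\gamma+p-1}{p}}\le C\Big(\into|\D\vp_n|^p+\into (u_n^+)^{\gamma-1}|\D u_n^+|^p\Big).
\end{equation*}

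Combining the three displays and invoking Lemma \ref{lemr} to bound $\into|\D\vp_n|^p\le C$, I obtain
\begin{equation*}
\into (u_n^+)^{r+\frac{\gamma+p-1}{p}}\le C+C\|f\|_{L^m(\Omega)}\Big(\into (u_n^+)^{\gamma m'}\Big)^{\frac1{m'}},
\end{equation*}
the last factor by H\"older's inequality with exponents $m$ and $m'$. It then remains to choose $\gamma$ so that the two powers of $u_n^+$ coincide, i.e. $\gamma m'=r+\frac{\gamma+p-1}{p}=:s$. Solving gives $\gamma=\frac{pr+p-1}{pm'-1}$ and precisely $s=\frac{m(pr+p-1)}{m(p-1)+1}$; moreover $\gamma\ge1$ is equivalent to $m'\le r+1$, that is to $m\ge(r+1)'$, which is exactly assumption \eqref{ip}. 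With this choice the estimate becomes $\into(u_n^+)^s\le C+C\|f\|_{L^m(\Omega)}\big(\into(u_n^+)^s\big)^{1/m'}$, and since $\frac1{m'}<1$ a standard absorption (Young's inequality) yields the uniform bound $\into(u_n^+)^s\le C$, after which the limit is taken as above.

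The step I expect to be delicate is the choice of the exponent $\frac{\gamma+p-1}{p}$ in the test function for $(ii)$: it is tuned precisely so that, after Young's inequality, the gradient of $\vp_n$ appears to the power $p$ with no weight in $u_n$ and can therefore be absorbed by the energy bound of Lemma \ref{lemr}, rather than as a weighted quantity $\into(u_n^+)^{\gamma-1}|\D\vp_n|^p$ which one cannot control directly. The remaining bookkeeping—admissibility of the test functions (guaranteed by $u_n\in\linf$ together with $\gamma\ge1$), the matching of the two exponents, and the final absorption—is routine once this choice is made.
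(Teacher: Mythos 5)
Your proposal is correct and is essentially the paper's own proof: the same pair of test functions up to reparametrization (your $\gamma$ is the paper's $p\gamma-p+1$, and your exponent $\frac{\gamma+p-1}{p}$ in $(ii)$ is the paper's $\gamma$), the same Young's inequality split that leaves $\into|\D\vp_n|^p$ unweighted so it can be absorbed via Lemma \ref{lemr}, and the same matching of exponents yielding $s=\frac{m(pr+p-1)}{m(p-1)+1}$ with $\gamma\geq 1$ equivalent to $m\geq(r+1)'$. The only cosmetic differences are this reparametrization and passing to the limit by Fatou's lemma rather than by weak convergence in $L^s(\Omega)$.
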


\begin{proof}
We recall that $u$ is obtained from \eqref{conr} and that $(u_n,\varphi_n)$ is a weak solution of the system \eqref{papp1}. Choosing $(u_n^+)^\gamma$ as test function in $(ii)$ of \eqref{papp1}, with $\gamma \geq 1$, we have
\begin{equation}
\label{sum1}
\gamma\into |\D\vp_n|^{p-2}\D\vp_n\cdot\D u_n^+(u_n^+)^{\gamma-1}=\into (u_n^+)^{r+\gamma}.
\end{equation}
Applying Young's inequality to the left hand side of \eqref{sum1} we obtain, by Lemma \ref{lemr}, that
\begin{align}
\label{sum2}
\gamma\into |\D\vp_n|^{p-2}\D\vp_n\cdot\D u_n^+(u_n^+)^{\gamma-1}&\leq C\into |\D\vp_n|^p+C\into |\D u_n|^p (u_n^+)^{p(\gamma-1)} \\
&=C(f)+C\into |\D u_n|^p (u_n^+)^{p\gamma-p}.\nonumber
\end{align}
Now using $(u_n^+)^{p\gamma-p+1}$ as test function in $(i)$ of \eqref{papp1} we have, by H\"older's inequality, that
\begin{align}
\label{sum3}
\into |\D u_n^+|^p (u_n^+)^{p\gamma-p}&\leq C\into |\D u_n^+|^p (u_n^+)^{p\gamma-p}+C\into\vp_n (u_n^+)^{r+p\gamma-p}\\
&\leq C\into f_n(u_n^+)^{p\gamma-p+1}\leq C \|f\|_{L^m(\Omega)}\left(\into (u_n^+)^{m'(p\gamma-p+1)}\right)^{\frac{1}{m'}}.\nonumber
\end{align}
As a consequence of \eqref{sum1}, \eqref{sum2} and \eqref{sum3} we obtain
\begin{equation}
\label{sum4}
\into (u_n^+)^{r+\gamma}\leq C(f)+C\|f\|_{L^m(\Omega)}\left(\into (u_n^+)^{m'(p\gamma-p+1)}\right)^{\frac{1}{m'}}.
\end{equation}
Imposing $r+\gamma=m'(p\gamma-p+1)$ we have 
$$\gamma=\frac{r(m-1)+m(p-1)}{m(p-1)+1} \quad \text{and} \quad s:=r+\gamma=\frac{m(pr+p-1)}{m(p-1)+1}.$$
We verify that $\gamma\geq 1$:
$$\gamma=\frac{r(m-1)+m(p-1)}{m(p-1)+1}\geq 1 \Leftrightarrow m\geq \frac{r+1}{r}=(r+1)',$$
which it is true by \eqref{ip}. Then, by \eqref{sum4}, we deduce
\begin{equation*}
\displaystyle \|u_n^+\|_{L^s(\Omega)}\leq C(f),
\end{equation*}
where $C(f)$ is a positive constant depending only on $\|f\|_{L^{m}(\Omega)}$. In the same way we obtain, using $u_n^-$ as test function, that 
\begin{equation*}
\displaystyle \|u_n^-\|_{L^s(\Omega)}\leq C(f).
\end{equation*}
Then we have 
\begin{equation*}
\displaystyle \|u_n\|_{L^s(\Omega)}=\|u_n^+\|_{L^s(\Omega)}+\|u_n^-\|_{L^s(\Omega)}\leq C(f),
\end{equation*}
and $u_n$ converges to $u$ weakly in $L^s(\Omega)$, so that $u\in L^s(\Omega)$.
\end{proof}

\begin{remark}
Comparing this summability result on $u$ with the result contained in \eqref{conr} we observe that
$$s=\frac{m(pr+p-1)}{m(p-1)+1}\geq r+1 \Leftrightarrow m\geq \frac{r+1}{r}=(r+1)',$$
then, if \eqref{ip} holds, $L^{s}(\Omega)\subset L^{r+1}(\Omega)$. Moreover, if $m\geq (p^*)'$, it follows from \cite{bg} that $u$ belongs to $L^{t}(\Omega)$, with $\dis t=\frac{Nm(p-1)}{N-pm}$. We have that
$$s\geq t \Leftrightarrow m\leq m_1.$$
Summarizing we obtain that the best summability results for $u$ are
\begin{equation}
\label{sum5}
u\in L^s(\Omega), \quad \text{if } (r+1)'\leq m< m_1,
\end{equation}
and
\begin{equation*}
u\in L^{t}(\Omega), \quad \text{if } m\geq m_1.
\end{equation*}
Then we note, by \eqref{sum5}, that we have also a regularizing effect for the summability of the solution $u$.
\end{remark}

\section{\textbf{Existence and regularizing effect in the dual case}}
\label{Sec4}

We prove now the existence theorem for a weak solution of \eqref{pbp} for $\theta\geq 0$ and $f$ belonging to $L^{(p^*)'}(\Omega)$. Let $\{f_n\}$ be a sequence that satisfies 
\begin{equation*}
\label{appd}
f_n\in\linf \text{, } |f_n|\leq |f| \text{ }\forall n\in\NN \text{ and } f_n\rightarrow f \text{ strongly in } L^{(p^*)'}(\Omega).
\end{equation*}
Then, by Proposition \ref{p1}, there exists a solution $(u_n,\vp_n)$ in $\sob \times \sob$ of the system
\begin{equation}
\label{papp}
\begin{cases}
-\div(|\D u_n|^{p-2}\D u_n)+A\vp_n^{\theta+1}|u_n|^{r-2}u_n=f_n, \quad &(I),\\
-\div(|\D\varphi_n|^{p-2}\D\varphi_n)=|u_n|^r\vp_n^\theta, \quad &(II),
\end{cases}
\end{equation}
with $\vp_n\geq 0$, $u_n$ and $\vp_n$ in $\linf$. Choosing $u_n$ as test function in $(I)$ and $\vp_n$ in $(II)$ we have
\begin{equation}
\label{s0d}
\into |\D u_n|^p+A\into \vp_n^{\theta+1}|u_n|^r=\into f_nu_n, \qquad \into |\D \vp_n|^p=\into |u_n|^r\vp_n^{\theta+1}.
\end{equation}
Then
\begin{equation}
\label{s1d}
\into |\D u_n|^p+\into |\D \vp_n|^p \leq C\into f_nu_n.
\end{equation}
We obtain, by \eqref{s1d} and applying H\"older's inequality and the Sobolev embedding theorem, that
\begin{align*}
&\into |\D u_n|^p\leq \into |\D u_n|^p+\into |\D \vp_n|^p \leq C\into f_nu_n \\
&\leq C\|f\|_{L^{(p^*)'}(\Omega)}\|u_n\|_{L^{p^*}(\Omega)}\leq C\|f\|_{L^{(p^*)'}(\Omega)}\|u_n\|_{\sob}, \nonumber
\end{align*}
so that
\begin{equation}
\label{s2d}
\|u_n\|_{\sob}\leq C\|f\|_{L^{(p^*)'}(\Omega)}^{\frac{1}{p-1}} \quad \text{ and }\quad \|\vp_n\|_{\sob}\leq C\|f\|_{L^{(p^*)'}(\Omega)}^{\frac{1}{p-1}}.
\end{equation}
Moreover, by \eqref{s0d}, we deduce
\begin{equation}
\label{s3d}
\into \vp_n^{\theta+1}|u_n|^r\leq C\|f\|_{L^{(p^*)'}(\Omega)}^{\frac{p}{p-1}}.
\end{equation}
Choosing $u_n^+$ as test function in $(II)$ we obtain
\begin{equation*}
\label{s4d}
\into |\D\vp_n|^{p-2}\D\vp_n\cdot\D u_n^+=\into |u_n|^ru_n^+\vp_n^\theta=\into |u_n^+|^{r+1}\vp_n^\theta.
\end{equation*}
Using Young's inequality and \eqref{s2d}, we find
\begin{align*}
\label{s5d}
\into |u_n^+|^{r+1}\vp_n^\theta &=\into |\D\vp_n|^{p-2}\D\vp_n\cdot\D u_n^+\leq \frac{1}{p'}\into |\D\vp_n|^p+ \frac{1}{p}\into |\D u_n^+|^p \\
&\leq \frac{1}{p'}\into |\D\vp_n|^p+ \frac{1}{p}\into |\D u_n|^p \leq C\|f\|_{L^{(p^*)'}(\Omega)}^{\frac{p}{p-1}}.\nonumber
\end{align*}
In the same way, choosing $u_n^-$ as test function in $(II)$, we deduce
$$\into |u_n^-|^{r+1}\vp_n^\theta\leq C\|f\|_{L^{(p^*)'}(\Omega)}^{\frac{p}{p-1}},$$
so that
\begin{equation}
\label{s6d}
\into |u_n|^{r+1}\vp_n^\theta=\into |u_n^+|^{r+1}\vp_n^\theta +\into |u_n^-|^{r+1}\vp_n^\theta \leq C\|f\|_{L^{(p^*)'}(\Omega)}^{\frac{p}{p-1}}.
\end{equation}

As a consequence of \eqref{s2d}, \eqref{s3d} and \eqref{s6d}, we have the following lemma.
\begin{lemma}
\label{lemd}
Let $f$ in $L^{(p^*)'}(\Omega)$, and let $A>0$, $r>1$ and $0\leq\theta<p-1$. Then the weak solution $(u_n,\vp_n)$ of \eqref{papp}, given by Proposition \ref{p1}, is such that
\begin{equation*}
\|u_n\|_{\sob}+\|\vp_n\|_{\sob}+\into \vp_n^{\theta+1}|u_n|^r+\into |u_n|^{r+1}\vp_n^\theta\leq C(f)
\end{equation*}
where $C(f)$ is a positive constant depending only on $\dis \|f\|_{L^{(p^*)'}(\Omega)}$.
\end{lemma}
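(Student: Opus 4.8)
The plan is to obtain all four bounds as \emph{a priori} energy estimates, using $(u_n,\vp_n)$ together with its positive and negative parts as test functions; this is admissible because Proposition \ref{p1} guarantees $u_n,\vp_n\in\linf$ and $\vp_n\geq0$, and every constant will depend only on $\|f\|_{L^{(p^*)'}(\Omega)}$ since $|f_n|\leq|f|$. First I would test $(I)$ of \eqref{papp} with $u_n$ and $(II)$ with $\vp_n$ to produce the two identities \eqref{s0d}. The structural point is that both identities involve the same nonnegative coupling integral $\into\vp_n^{\theta+1}|u_n|^r$: the first shows it is bounded by $\frac1A\into f_nu_n$, while the second equates it with $\into|\D\vp_n|^p$. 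Discarding the nonnegative coupling term in the first identity bounds $\into|\D u_n|^p$ by $\into f_nu_n$; since the second identity equates $\into|\D\vp_n|^p$ with the same coupling integral, which is itself at most $\frac1A\into f_nu_n$, adding the two produces \eqref{s1d}, namely $\into|\D u_n|^p+\into|\D\vp_n|^p\leq C\into f_nu_n$.

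Next I would bound the right-hand side by H\"older's inequality (exponents $(p^*)'$ and $p^*$) followed by the Sobolev embedding, obtaining $\into f_nu_n\leq C\|f\|_{L^{(p^*)'}(\Omega)}\|u_n\|_{\sob}$. Since $\|u_n\|_{\sob}^p=\into|\D u_n|^p$, absorbing one factor of $\|u_n\|_{\sob}$ yields $\|u_n\|_{\sob}^{p-1}\leq C\|f\|_{L^{(p^*)'}(\Omega)}$, and the analogous bound for $\|\vp_n\|_{\sob}$ is read off from \eqref{s1d}; this is \eqref{s2d}. Reinserting \eqref{s2d} into \eqref{s0d} then controls the coupling term and gives \eqref{s3d}. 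At this stage the first three summands of the lemma are under control, each with a constant depending only on $\|f\|_{L^{(p^*)'}(\Omega)}$.

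The remaining term $\into|u_n|^{r+1}\vp_n^\theta$ is where the argument is less automatic, and I expect it to be the main obstacle: the energy identities only deliver $\vp_n^{\theta+1}|u_n|^r$, with exponent $\theta+1$ on $\vp_n$, rather than the combination $\vp_n^{\theta}|u_n|^{r+1}$ appearing in the statement. The way around this is to choose $u_n^+$ (instead of $\vp_n$) as test function in $(II)$ of \eqref{papp}: the right-hand side becomes exactly $\into|u_n^+|^{r+1}\vp_n^\theta$, while the left-hand side $\into|\D\vp_n|^{p-2}\D\vp_n\cdot\D u_n^+$ is estimated by Young's inequality as $\frac1{p'}\into|\D\vp_n|^p+\frac1p\into|\D u_n^+|^p$. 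Using that $\D u_n^+=\D u_n$ a.e.\ on $\{u_n>0\}$ and vanishes elsewhere, so $\into|\D u_n^+|^p\leq\into|\D u_n|^p$, both gradient integrals are already controlled by \eqref{s2d}. Running the same computation with $u_n^-$ and adding the two yields \eqref{s6d}. Collecting \eqref{s2d}, \eqref{s3d} and \eqref{s6d} then gives the four asserted bounds with a constant depending only on $\|f\|_{L^{(p^*)'}(\Omega)}$, which is the claim.
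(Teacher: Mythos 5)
Your proposal is correct and follows essentially the same route as the paper: testing $(I)$ with $u_n$ and $(II)$ with $\vp_n$ to get the energy identities, absorbing via H\"older and Sobolev to bound $\|u_n\|_{\sob}$, $\|\vp_n\|_{\sob}$ and the coupling term, and then testing $(II)$ with $u_n^+$ and $u_n^-$ together with Young's inequality to control $\into |u_n|^{r+1}\vp_n^\theta$. The only difference is presentational: you spell out more explicitly why the second identity in \eqref{s0d} lets the coupling integral be replaced by $\into f_nu_n$, a step the paper passes over with ``Then.''
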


Once again, by Lemma \ref{lemd}, there exist subsequences still indexed by $u_n$ and $\vp_n$ and functions $u$ and $\vp$ in $\sob$ such that
\begin{align}
\label{cond}
\begin{array}{l}
u_n \rightarrow u \text{ weakly in } \sob, \text{ strongly in } L^q(\Omega),\text{ with }q<p^*,\text{ and a.e. in } \Omega, \\
\vp_n \rightarrow \vp \text{ weakly in } \sob,\text{ strongly in } L^q(\Omega),\text{ with }q<p^*, \text{ and a.e. in } \Omega. 
\end{array}
\end{align}

\begin{theorem}
\label{teo1}
Let $A>0$, and let $r>1$, $0\leq\theta<p-1$ and $f$ in $L^m(\Omega)$, with $m\geq (p^*)'$. Then there exists a weak solution $(u,\vp)$ in $\sob\times\sob$ of system \eqref{pbp}.
\end{theorem}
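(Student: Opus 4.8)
The plan is to obtain $(u,\vp)$ as the limit of the approximate solutions $(u_n,\vp_n)$ of \eqref{papp}, passing to the limit in their weak formulations
\begin{equation*}
\into |\D u_n|^{p-2}\D u_n\cdot\D w+A\into \vp_n^{\theta+1}|u_n|^{r-2}u_n w=\into f_n w, \qquad \into |\D\vp_n|^{p-2}\D\vp_n\cdot\D w=\into |u_n|^r\vp_n^\theta w,
\end{equation*}
tested against $w\in\sob\cap\linf$. The bounds of Lemma \ref{lemd} and the convergences \eqref{cond} are the starting point, and the scheme is the same as in Proposition \ref{p1}; the real difference is that now $u_n$ and $\vp_n$ are only bounded in $\sob$ and no longer in $\linf$, so the coupling terms must be handled more carefully.

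First I would recover the almost everywhere convergence of the gradients. Writing $-\div(|\D u_n|^{p-2}\D u_n)=f_n-A\vp_n^{\theta+1}|u_n|^{r-2}u_n=:g_n$ and $-\div(|\D\vp_n|^{p-2}\D\vp_n)=|u_n|^r\vp_n^\theta=:h_n$, I would check that $g_n$ and $h_n$ are bounded in $\luno$. For $g_n$ this follows from $|f_n|\le|f|$ together with Young's inequality $\vp_n^{\theta+1}|u_n|^{r-1}\le \frac{r-1}{r}\vp_n^{\theta+1}|u_n|^r+\frac1r\vp_n^{\theta+1}$, the first term being controlled by Lemma \ref{lemd} and the second by the embedding $\sob\hookrightarrow L^{p^*}(\Omega)$ (recall $\theta+1<p<p^*$); for $h_n$ one argues symmetrically with $|u_n|^r\vp_n^\theta\le\frac{r}{r+1}|u_n|^{r+1}\vp_n^\theta+\frac1{r+1}\vp_n^\theta$. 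Then Theorem 2.1 in \cite{bm} yields $\D u_n\to\D u$ and $\D\vp_n\to\D\vp$ almost everywhere in $\Omega$, and since $|\D u_n|^{p-2}\D u_n$ and $|\D\vp_n|^{p-2}\D\vp_n$ are bounded in $(L^{p'}(\Omega))^N$ by \eqref{s2d}, they converge weakly in $(L^{p'}(\Omega))^N$ to $|\D u|^{p-2}\D u$ and $|\D\vp|^{p-2}\D\vp$. This disposes of the principal parts once $w$ is fixed.

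The hard part, which I expect to be the main obstacle, is the passage to the limit in the coupling terms, because $L^1$-boundedness is not enough and one genuinely needs equi-integrability; here the coupled estimates of Lemma \ref{lemd} must be combined with the extra integrability coming from $\sob\hookrightarrow L^{p^*}(\Omega)$. For the first term, on a measurable set $E$, Hölder's inequality with exponents $\frac{r}{r-1}$ and $r$ gives
\begin{equation*}
\int_E \vp_n^{\theta+1}|u_n|^{r-1}\le\left(\int_E \vp_n^{\theta+1}|u_n|^r\right)^{\frac{r-1}{r}}\left(\int_E \vp_n^{\theta+1}\right)^{\frac1r},
\end{equation*}
and since $\vp_n^{\theta+1}$ is bounded in $L^{p^*/(\theta+1)}(\Omega)$ with $p^*/(\theta+1)>1$, the last factor tends to zero uniformly in $n$ as $|E|\to 0$. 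An analogous estimate with exponents $\frac{r+1}{r}$ and $r+1$, based on $\into|u_n|^{r+1}\vp_n^\theta\le C$ and the boundedness of $\vp_n^\theta$ in $L^{p^*/\theta}(\Omega)$, shows that $|u_n|^r\vp_n^\theta$ is equi-integrable as well. Together with the almost everywhere convergence of $u_n$ and $\vp_n$ from \eqref{cond}, Vitali's theorem then gives $\vp_n^{\theta+1}|u_n|^{r-2}u_n\to\vp^{\theta+1}|u|^{r-2}u$ and $|u_n|^r\vp_n^\theta\to|u|^r\vp^\theta$ strongly in $\luno$.

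Finally I would collect the pieces: for every $w\in\sob\cap\linf$ the principal parts converge by weak $(L^{p'}(\Omega))^N$ convergence, the coupling terms converge against the bounded $w$ by the $\luno$ convergence just established, and $\into f_n w\to\into fw$ since $f_n\to f$ in $L^{(p^*)'}(\Omega)$. Passing to the limit shows that $(u,\vp)\in\sob\times\sob$ is a weak solution of \eqref{pbp}, with $\vp\ge0$ as the almost everywhere limit of the nonnegative $\vp_n$. The very same argument specialized to $\theta=0$ (so that the second datum reads $|u_n|^r$) proves Theorem \ref{teo2}, which is why its proof was deferred to this point.
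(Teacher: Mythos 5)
Your passage to the limit is correct, and the way you obtain the key equi-integrability is genuinely different from (and simpler than) the paper's. The paper treats the two coupling terms asymmetrically: for $|u_n|^r\vp_n^\theta$ it splits the integral over $E\cap\{|u_n|\le\overline{k}\}$ and $E\cap\{|u_n|>\overline{k}\}$ and uses the bound on $\into|u_n|^{r+1}\vp_n^\theta$, while for $\vp_n^{\theta+1}|u_n|^{r-1}$ it goes back to equation $(I)$, tests with $T_\eps(G_k(u_n))/\eps$, and derives the tail estimate $\int_{\{|u_n|\geq k\}}\vp_n^{\theta+1}|u_n|^{r-1}\le\frac1A\int_{\{|u_n|\geq k\}}|f|$ before applying Vitali's theorem. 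You replace both of these with a single H\"older interpolation, e.g.
$$\int_E \vp_n^{\theta+1}|u_n|^{r-1}\le\left(\into \vp_n^{\theta+1}|u_n|^r\right)^{\frac{r-1}{r}}\left(\int_E \vp_n^{\theta+1}\right)^{\frac1r},$$
whose first factor is bounded by Lemma \ref{lemd} and whose second factor is uniformly small as $|E|\to0$ because $\vp_n^{\theta+1}$ is bounded in $L^{p^*/(\theta+1)}(\Omega)$ with $p^*/(\theta+1)>1$; the term $|u_n|^r\vp_n^\theta$ is handled identically. This uses only the coupled bounds of Lemma \ref{lemd} plus the Sobolev embedding, avoids the renormalization-type test function entirely, and treats the two equations symmetrically; it buys the same conclusion with less machinery. (A minor notational point: for $\theta=0$ the space $L^{p^*/\theta}(\Omega)$ is meaningless, but then $\vp_n^\theta\equiv1$ and its equi-integrability is trivial, so nothing breaks.)

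The one thing missing relative to the paper is the final extension of the test-function class. You establish the two weak formulations only against $w\in\sob\cap\linf$ and then declare $(u,\vp)$ a weak solution. The paper goes further: it extends both identities to arbitrary $\eta\in\sob$ by testing with $T_k(\eta)$ and letting $k\to\infty$, using Lebesgue's theorem on the principal part and Beppo Levi's theorem on the coupling terms after splitting $T_k(\eta)=T_k(\eta^+)-T_k(\eta^-)$; this step also shows that $\into|u|^r\vp^\theta|\eta|$ and $\into\vp^{\theta+1}|u|^{r-1}|\eta|$ are finite, so that the formulation even makes sense for unbounded $\eta$. If ``weak solution'' is understood with all $\sob$ test functions, as in the paper, you need to append this truncation argument; it is routine given what you have proved, but it is not omittable.
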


\begin{proof}
Let $u$ and $\vp$ be the functions defined in \eqref{cond}. We want to pass to the limit in $(II)$ of \eqref{papp}. We recall that $\vp_n$ satisfies
\begin{equation}
\label{td0}
\into |\D\vp_n|^{p-2}\D\vp_n \cdot\D\psi=\into |u_n|^r\vp_n^\theta\psi, \quad \forall \psi\in\sob.
\end{equation}
We want to prove that $|u_n|^r\vp_n^\theta$ strongly converges to $|u|^r\vp^\theta$ in $L^1(\Omega)$. Fix $\sigma>0$ and let $E\subset\Omega$. By Lemma \ref{lemd} there exists $\overline{k}\in\NN$ such that  
\begin{align*}
\label{td1}
\int_E |u_n|^r\vp_n^\theta &=\int_{E\cap\{|u_n|\leq\overline{k}\}} |u_n|^r\vp_n^\theta+\int_{E\cap\{|u_n|>\overline{k}\}} |u_n|^r\vp_n^\theta \leq \overline{k}^r\int_E \vp_n^\theta+\frac{1}{\overline{k}}\int_{\{|u_n|>\overline{k}\}}|u_n|^{r+1}\vp_n^\theta \\
&\leq \overline{k}^r\int_E \vp_n^\theta +\frac{C(f)}{\overline{k}}\leq \overline{k}^r\int_E \vp_n^\theta +\frac{\sigma}{2}.\nonumber
\end{align*}
Since, by \eqref{cond}, $\vp_n^\theta$ strongly converges to $\vp^\theta$ in $L^1(\Omega)$, applying Vitali's theorem, there exists $\delta>0$ such that $|E|<\delta$ and 
$$\int_E |u_n|^r\vp_n^\theta\leq \overline{k}^r\int_E \vp_n^\theta +\frac{\sigma}{2}\leq \sigma.$$
Then, once again using Vitali's theorem, we have 
\begin{equation}
\label{td2}
|u_n|^r\vp_n^\theta \rightarrow |u|^r\vp^\theta \text{ strongly in } L^1(\Omega).
\end{equation}
Hence, by Theorem 2.1 in \cite{bm}, we obtain that $\D\vp_n$ converges $\D\vp$ almost everywhere in $\Omega$. Moreover
$$\||\D\vp_n|^{p-2}\D\vp_n\|_{\lp'n}\leq \|\vp_n\|_{\sob}^{p-1}\leq C(f),$$
so that
\begin{equation}
\label{td3}
|\D\vp_n|^{p-2}\D\vp_n\rightarrow |\D\vp|^{p-2}\D\vp \text{ weakly in }\lp'n.
\end{equation}
Fix $\psi$ in $\sob\cap\linf$, we have, by \eqref{td3}, that
$$\lim_{n\to\infty}\into|\D\vp_n|^{p-2}\D\vp_n\cdot\D\psi=\into |\D\vp|^{p-2}\D\vp\cdot\D\psi.$$
On the other hand, by \eqref{td2} and Vitali's theorem, we find
$$\lim_{n\to\infty}\into |u_n|^r\vp_n^\theta\psi=\into |u|^r\vp^\theta\psi.$$
By passing to the limit in \eqref{td0}, we obtain that
\begin{equation}
\label{td4}
\into |\D\vp|^{p-2}\D\vp\cdot\D\psi=\into |u|^r\vp^\theta\psi, \quad \forall \psi\in\sob\cap\linf.
\end{equation}
Let $\eta$ belong to $\sob$. Choosing $\psi=T_k(\eta)$ as test function in \eqref{td4}, we obtain
\begin{equation}
\label{td5}
\into |\D\vp|^{p-2}\D\vp\cdot\D T_k(\eta)=\into |u|^r\vp^\theta T_k(\eta). 
\end{equation}
We have that $\dis |\D\vp|^{p-2}\D\vp\cdot\D T_k(\eta)$ converges to $\dis |\D\vp|^{p-2}\D\vp\cdot\D\eta$ almost everywhere in $\Omega$ and that
$$\left ||\D\vp|^{p-2}\D\vp\cdot\D T_k(\eta)\right |\leq \left ||\D\vp|^{p-2}\D\vp\cdot\D\eta\right |,$$
with $\dis \left ||\D\vp|^{p-2}\D\vp\cdot\D\eta \right |$ in $\luno$.
Then, by Lebesgue's theorem, we deduce
\begin{equation}
\label{td6}
\lim_{k\to\infty}\into |\D\vp|^{p-2}\D\vp\cdot\D T_k(\eta)=\into |\D\vp|^{p-2}\D\vp\cdot\D\eta.
\end{equation}
Now we want to let $k$ to infinity on the right hand side of \eqref{td5}. We recall that
$$|u|^r\vp^\theta T_k(\eta)=|u|^r\vp^\theta T_k(\eta^+)-|u|^r\vp^\theta T_k(\eta^-),$$
where $|u|^r\vp^\theta T_k(\eta^+)$ and $|u|^r\vp^\theta T_k(\eta^-)$ are nonnegative functions increasing in $k$. We have that $|u|^r\vp^\theta T_k(\eta^+)$ converges to $|u|^r\vp^\theta\eta^+$ and $|u|^r\vp^\theta T_k(\eta^-)$ converges to $|u|^r\vp^\theta\eta^-$ almost everywhere in $\Omega$. It follows from Beppo Levi's theorem that 
\begin{equation*}
\lim_{k\to\infty}\into |u|^r\vp^\theta T_k(\eta^+)=\into |u|^r\vp^\theta\eta^+ \text{ and } \lim_{k\to\infty}\into |u|^r\vp^\theta T_k(\eta^-)=\into |u|^r\vp^\theta\eta^-,
\end{equation*}
so that
\begin{align}
\label{td7}
&\lim_{k\to\infty}\into |u|^r\vp^\theta T_k(\eta)=\lim_{k\to\infty}\into |u|^r\vp^\theta T_k(\eta^+)-\lim_{k\to\infty}\into |u|^r\vp^\theta T_k(\eta^-) \\
&=\into |u|^r\vp^\theta \eta^+ -\into |u|^r\vp^\theta\eta^-=\into |u|^r\vp^\theta\eta. \nonumber
\end{align}
Letting $k$ to infinity in \eqref{td5}, by \eqref{td6} and \eqref{td7}, we obtain
\begin{equation*}
\label{td8}
\into |\D\vp|^{p-2}\D\vp\cdot\D\eta=\into |u|^r\vp^\theta\eta, \quad \forall\eta\in\sob.
\end{equation*}
Then $\vp$ in $\sob$ is a weak solution of the second equation of \eqref{pbp}. \\
Now we want to pass to the limit in $(I)$ of \eqref{papp}. We have that $u_n$ satisfies
\begin{equation}
\label{td9}
\into |\D u_n|^{p-2}\D u_n\cdot\D\psi+A\into \vp_n^{\theta+1} |u_n|^{r-2}u_n\psi=\into f_n\psi, \quad \forall\psi\in\sob.
\end{equation}
Fix $\eps>0$. Choosing $\dis \psi=\frac{T_\eps(G_k(u_n))}{\eps}$ in \eqref{td9}, we obtain
\begin{equation*}
\frac{1}{\eps} \int_{\{k\leq |u_n|\leq k+\eps\}} |\D u_n|^p+A\int_{\{|u_n|\geq k\}} \vp_n^{\theta+1} |u_n|^{r-2}u_n \frac{T_\eps(G_k(u_n))}{\eps}=\int_{\{|u_n|\geq k\}} f_n \frac{T_\eps(G_k(u_n))}{\eps}.
\end{equation*}
Dropping the first nonnegative term, we have
\begin{align*}
&A\int_{\{|u_n|\geq k+\eps\}}\vp_n^{\theta+1} |u_n|^{r-1}\leq A\int_{\{|u_n|\geq k\}} \vp_n^{\theta+1} |u_n|^{r-2}u_n \frac{T_\eps(G_k(u_n))}{\eps} \\
&\leq\int_{\{|u_n|\geq k\}} |f_n|\left |\frac{T_\eps(G_k(u_n))}{\eps}\right |\leq\int_{\{|u_n|\geq k\}}|f_n|,
\end{align*}
so that
\begin{equation*}
A\int_{\{|u_n|\geq k+\eps\}}\vp_n^{\theta+1} |u_n|^{r-1}\leq\int_{\{|u_n|\geq k\}}|f|.
\end{equation*}
Letting $\eps$ tend to zero, by Beppo Levi's theorem, we obtain
\begin{equation}
\label{td10}
\int_{\{|u_n|\geq k\}}\vp_n^{\theta+1} |u_n|^{r-1}\leq\frac{1}{A}\int_{\{|u_n|\geq k\}}|f|.
\end{equation}
Once again fix $\sigma>0$ and let $E\subset\Omega$. By \eqref{td10}, we have
\begin{align*}
&\int_E \vp_n^{\theta+1}|u_n|^{r-1}=\int_{E\cap\{|u_n|\leq k\}}\vp_n^{\theta+1}|u_n|^{r-1}+\int_{E\cap\{|u_n|>k\}}\vp_n^{\theta+1}|u_n|^{r-1} \\
&\leq k^{r-1}\int_{E} \vp_n^{\theta+1} +\frac{1}{A}\int_{\{|u_n|\geq k\}}|f|. 
\end{align*}
As a consequence of \eqref{cond} and applying Vitali's theorem, there exist $\tilde{k}$ and $\delta>0$, with $|E|<\delta$, such that
\begin{equation*}
\frac{1}{A}\int_{\{|u_n|\geq \tilde{k}\}}|f|\leq \frac{\sigma}{2} \quad \text{ and }\quad \tilde{k}^{r-1}\int_E \vp_n^{\theta+1}\leq\frac{\sigma}{2},
\end{equation*}
uniformly in $n$. Then we deduce
\begin{equation}
\label{td11}
\int_E \vp_n^{\theta+1}|u_n|^{r-1}\leq \sigma,
\end{equation}
uniformly in $n$. We recall that, by \eqref{cond}, $\vp_n^{\theta+1} |u_n|^{r-1}$ converges to $\vp^{\theta+1} |u|^{r-1}$ almost everywhere in $\Omega$. Thanks to \eqref{td11}, applying Vitali's theorem, we obtain that
\begin{equation}
\label{td12}
\vp_n^{\theta+1} |u_n|^{r-1} \rightarrow \vp^{\theta+1} |u|^{r-1} \text{ strongly in } \luno.
\end{equation}
We have that 
$$-\div(|\D u_n|^{p-2}\D u_n)=-A\vp_n^{\theta+1} |u_n|^{r-2}u_n+f_n=:g_n,$$
and, by the assumptions on $f$ and \eqref{td12}, that $\dis \|g_n\|_{\luno}\leq C$. Applying Theorem 2.1 in \cite{bm}, we obtain that $\D u_n$ converges to $\D u$ almost everywhere in $\Omega$. Moreover 
$$\||\D u_n|^{p-2}\D u_n\|_{\lp'n}\leq \|u\|_{\sob}^{p-1}\leq C(f),$$
then
\begin{equation}
\label{td13}
|\D u_n|^{p-2}\D u_n\rightarrow |\D u|^{p-2}\D u \text{ weakly in } \lp'n.
\end{equation}
By passing to the limit as $n$ tends to infinity in \eqref{td9}, by \eqref{td12} and \eqref{td13}, and applying Lebesgue's theorem, we deduce that
\begin{equation*}
\into |\D u|^{p-2}\D u\cdot \D\psi+A\into \vp^{\theta+1} |u|^{r-2}u\psi=\into f\psi, \quad \forall\psi\in\sob\cap\linf.
\end{equation*}
Proceeding as when we passed to the limit in $(II)$, we have
$$\into |\D u|^{p-2}\D u\cdot \D v+A\into \vp^{\theta+1} |u|^{r-2}u v=\into fv, \quad \forall v\in\sob.$$
Then $u$ in $\sob$ is a weak solution of the first equation of \eqref{pbp} and $(u,\vp)$ is a weak solution of \eqref{pbp}.
\end{proof}

\begin{remark}
\label{finalrm2}
We want to stress the fact that, in order to prove this theorem, we only used the results \eqref{cond} obtained as consequence of the estimates in Lemma \ref{lemd}. Since the results \eqref{conr} are analogous, proceeding in the same way we can prove, as said before, Theorem \ref{teo2}.
\end{remark}

\begin{remark}
We observe that, thanks to the results in \cite{bo1}, the second equation of \eqref{pbp} admits a weak solution in $\sob$ if $\dis |u|^r\in L^s(\Omega)$, with $\dis s\geq \left(\frac{p^*}{\theta+1}\right)'$. We recall that $u$ belongs to $L^t(\Omega)$, with $\displaystyle t=\frac{Nm(p-1)}{N-pm}$. Then, if $\displaystyle \frac{t}{r}<\left(\frac{p^*}{\theta+1}\right)'$, we deduce once again a regularizing effect on $\vp$ due to the coupling in the system. We have that 
$$\frac{t}{r}<\left(\frac{p^*}{\theta+1}\right)' \Leftrightarrow m<m_2:=\frac{Npr}{N(p-1)^2+p(p-1)+p^2r-\theta(p-1)(N-p)}.$$
For this to be possible we must have that $r>p^*-1-\theta$. We stress the fact that for $\theta=0$ we recover the regularizing effect on $\vp$ observed in Remark \ref{main}.
\end{remark}

In this case ($\theta>0$) we are not able to prove a regularizing effect on the existence of a finite energy solution for the first equation of \eqref{pbp}. We feel that this is an obstacle only due to the method used, and that the following conjecture should be true.
\begin{conj}
\label{finalrm0}
Let $A>0$, and let $r>1$ and $0\leq\theta<p-1$. Then there exists $1<\underline{m}<(p^*)'$ such that if $f$ belongs to $L^m(\Omega)$, with $m\geq\underline{m}$, then there exists a weak solution $(u,\vp)$ in $\sob\times\sob$ of system \eqref{pbp}.
\end{conj}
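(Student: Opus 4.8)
\emph{A possible line of attack.} The plan is to reproduce the approximation-and-estimate scheme of Section~\ref{Sec4} but to push the a priori bounds below the dual exponent, exactly as the coupling allowed us to do for $\theta=0$ in Section~\ref{Sec3}. Thus I would start from a sequence $\{f_n\}\subset\linf$ with $|f_n|\le|f|$ and $f_n\to f$ in $L^m(\Omega)$, and let $(u_n,\vp_n)$ be the bounded solutions of the approximate system produced by Proposition~\ref{p1}. Testing the first equation with $u_n$ and the second with $\vp_n$, $u_n^+$ and $u_n^-$ as in Lemma~\ref{lemd}, one obtains the energy inequality $\into|\D u_n|^p+\into|\D\vp_n|^p\le C\into f_nu_n$ together with the two weighted bounds
\begin{equation*}
\into \vp_n^{\theta+1}|u_n|^r\le C\into f_nu_n, \qquad \into \vp_n^{\theta}|u_n|^{r+1}\le C\into f_nu_n.
\end{equation*}
By Remark~\ref{finalrm2}, once these quantities together with $\|u_n\|_{\sob}$ and $\|\vp_n\|_{\sob}$ are bounded uniformly in $n$, passing to the limit is routine and identical to the proof of Theorem~\ref{teo1}. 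Hence the whole problem reduces to controlling $\into f_nu_n\le\|f\|_{L^m(\Omega)}\|u_n\|_{L^{m'}(\Omega)}$ when $m<(p^*)'$, i.e. when $m'>p^*$: one needs an a priori summability of $u_n$ strictly better than the Sobolev exponent.

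For $\theta=0$ the second weighted bound is \emph{unweighted} and yields at once $\|u_n\|_{\lru}\le C\|f\|_{L^{(r+1)'}(\Omega)}^{1/r}$, which (for $r>p^*-1$) closes the estimate; the decisive gain comes entirely from that inequality. For $\theta>0$ the gain must still come from the second weighted bound, but the weight $\vp_n^\theta$ now obstructs the extraction of an unweighted norm, and this is the heart of the matter. My plan would be to first remove the weight and then bootstrap. Once some foothold $u_n\in L^{q_0}(\Omega)$ with $q_0>p^*$ is secured, the power test functions $(u_n^+)^{p\gamma-p+1}$ used in Proposition~\ref{p2} on the first equation — whose absorption term has the favourable sign and may be dropped — give a self-improving inequality of the form
\begin{equation*}
\|u_n^+\|_{L^{\gamma p^*}(\Omega)}^{\gamma p}\le C\|f\|_{L^m(\Omega)}\,\|u_n^+\|_{L^{m'(p\gamma-p+1)}(\Omega)}^{p\gamma-p+1},
\end{equation*}
which can be iterated, exploiting also the control of $\|\vp_n\|_{\sob}+\|\vp_n\|_{\linf}$ by the summability of $|u_n|^r$ from \cite{bo1}, until the summability of $u_n$ exceeds $p^*$. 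The threshold $\underline m$ would then be read off from the resulting exponents, and I would expect a structural condition relating $r$, $p$ and $\theta$ — of the type $r>p^*-1-\theta$ already appearing in the final remark for the regularizing effect on $\vp$ — to be required for the iteration to start.

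The main obstacle is precisely the weight-removal step that provides the initial foothold $q_0>p^*$. The unweighted quantity $\into|u_n|^{r+1}$ can be recovered from $\into\vp_n^\theta|u_n|^{r+1}$ only through a lower bound on $\vp_n$, and no uniform positive lower bound is available, since $\vp_n\in\sob$ vanishes on $\partial\Omega$. A plausible substitute is a distance-weighted estimate $\vp_n(x)\gtrsim \mathrm{dist}(x,\partial\Omega)$, obtained by comparison from $-\Delta_p\vp_n\ge 0$ together with a uniform interior positivity of the source $|u_n|^r\vp_n^\theta$, combined with a H\"older splitting $\into|u_n|^{q}\le\bigl(\into\vp_n^\theta|u_n|^{r+1}\bigr)^{\alpha}\bigl(\into\vp_n^{-s}\bigr)^{1-\alpha}$ and the integrability of negative powers of the distance function below order one. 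Making this quantitative and uniform in $n$ — in particular securing a source lower bound independent of $n$ near $\partial\Omega$ — is the delicate point, and it is exactly where the sublinearity $0<\theta<p-1$ of the second equation breaks the clean argument that settles the case $\theta=0$. This is why the statement is only offered as a conjecture.
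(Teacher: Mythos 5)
The statement you were asked about is a conjecture, and the paper does not prove it unconditionally either: what the paper offers, immediately after the statement, is a \emph{conditional} proof under the extra pointwise hypothesis $|u_n|\leq c\,\vp_n$ in $\Omega$. Your reduction of the problem (everything hinges on bounding $\into f_nu_n$, i.e.\ on summability of $u_n$ beyond $p^*$, the obstruction being the weight $\vp_n^\theta$ in $\into \vp_n^\theta|u_n|^{r+1}\leq C\into f_nu_n$) is exactly the paper's, and your guess that a condition of the type $r>p^*-1-\theta$ should appear is precisely what the paper's conditional threshold $\underline{m}=(r+1+\theta)'<(p^*)'$ requires. But you miss the paper's device, and your substitute points in the wrong direction. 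The paper does not bound $\vp_n$ from below by a quantity independent of $u_n$ (your distance-function barrier); it assumes $\vp_n\geq |u_n|/c$, so the weight \emph{helps} rather than hurts:
\begin{equation*}
\into |u_n^+|^{r+1}\vp_n^\theta\;\geq\;\frac{1}{c^\theta}\into |u_n^+|^{r+1+\theta},
\end{equation*}
a gain of $\theta$ powers, after which the argument closes verbatim as in the case $\theta=0$ with $\underline{m}=(r+1+\theta)'$. Your route loses powers instead: even granting a uniform-in-$n$ barrier $\vp_n\gtrsim \mathrm{dist}(x,\partial\Omega)$ (itself doubtful, since a uniform interior lower bound on the source $|u_n|^r\vp_n^\theta$ is not available), the H\"older splitting against $\into \mathrm{dist}(x,\partial\Omega)^{-s}$ with $s<1$ extracts at most $\into |u_n|^{q}$ with $q<(r+1)/(1+\theta)$, strictly below $r+1$; your foothold $q_0>p^*$ would then need $r>(1+\theta)p^*-1$, far stronger than $r>p^*-1-\theta$.

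Independently of the foothold, your bootstrap step is flawed: the displayed inequality is \emph{not} self-improving. Given $u_n\in L^q(\Omega)$, choosing $\gamma$ so that $m'(p\gamma-p+1)=q$ turns your inequality into the affine iteration map
\begin{equation*}
q\;\longmapsto\; \frac{p^*}{pm'}\,q+\frac{p^*}{p'},
\end{equation*}
whose slope satisfies $p^*/(pm')<1/p<1$ when $m<(p^*)'$ (since then $m'>p^*$). It is a contraction, and its fixed point is exactly $t=Nm(p-1)/(N-pm)$, which is $<p^*$ precisely when $m<(p^*)'$. Hence iterating from any $q_0>p^*$ \emph{degrades} the exponent toward $t$ rather than improving it: dropping the absorption term discards the coupling, and the first equation alone can never do better than the Boccardo--Giachetti exponent $t$. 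A genuine gain requires testing the \emph{second} equation, as in Proposition \ref{p2}, where the left-hand side becomes $\into (u_n^+)^{r+\gamma}\vp_n^\theta$ and the large exponent $r$ enters --- but there the weight $\vp_n^\theta$ reappears, which is the same obstruction you started from. This circularity is exactly what the paper's hypothesis $|u_n|\leq c\,\vp_n$ cuts through, and why, absent such a comparison, the statement remains a conjecture.
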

For instance if we assume that $|u_n|\leq c\,\vp_n$ in $\Omega$, for some $c>0$, we are able to prove that this conjecture is true with $\underline{m}=(r+1+\theta)'$ and $r>p^*-1-\theta$. Indeed, if we consider the approximate problem \eqref{papp}, choosing $u_n^+$ as test function in $(II)$, we obtain
\begin{equation}
\label{finalrm}
\into |\D\vp_n|^{p-2}\D\vp_n\cdot\D u_n^+\,=\,\into |u_n|^r\vp_n^\theta u_n^+\,=\,\into |u_n^+|^{r+1}\vp_n^\theta\,\geq\frac{1}{c^\theta}\into |u_n^+|^{r+1+\theta}\,.
\end{equation}
So, by Young's inequality, using \eqref{s1d} and applying H\"older's inequality, we deduce from \eqref{finalrm} that
\begin{align*}
&\frac{1}{c^\theta}\into |u_n^+|^{r+1+\theta}\,\leq\,\into |\D\vp_n|^{p-2}\D\vp_n\cdot\D u_n^+\,\leq\, \frac{1}{p'}\into |\D\vp_n|^p+ \frac{1}{p}\into |\D u_n^+|^p \\
&\leq\, \frac{1}{p'}\into |\D\vp_n|^p+ \frac{1}{p}\into |\D u_n|^p\, \leq\, C\into |f| |u_n|\,\leq\,C\|f\|_{L^{(r+1+\theta)'}(\Omega)}\|u_n\|_{L^{r+1+\theta}(\Omega)}\,.
\end{align*}
Thus we have, once again, that
$$
\|u_n\|_{\sob}+\|\vp_n\|_{\sob}+\into \vp_n^{\theta+1}|u_n|^r+\into |u_n|^{r+1}\vp_n^\theta\leq C(f)\,,
$$
where $C(f)$ is a positive constant depending only on $\dis \|f\|_{L^{(r+1+\theta)'}(\Omega)}$. \\
Thanks to these estimates it follows from Remark \ref{finalrm2} that we can pass to the limit in \eqref{papp}. Hence we have proved our conjecture with $\underline{m}=(r+1+\theta)'$. \\
We note that for $\theta=0$ we obtain $\underline{m}=(r+1)'$, that is, exactly, the result stated in Theorem \ref{teo2}.

\end{document}